\newtheorem{thm}{Theorem}
\newtheorem{cor}{Corollary}
\theoremstyle{remark}
\theoremstyle{definition}
\newtheorem{exa}{Example}
\def\Z{{\mathbb Z}}
\def\R{{\mathbb R}}
\title{Group-valued invariant of knots in the full torus}
\author{Vassily Olegovich Manturov\footnote{Moscow Institute of Physics and technology,
Kazan Federal University, Northeastern University (China)},\\
 Igor Mikhailovich Nikonov}
\date{}
\begin{document}

\maketitle

AMS MSC2020: 05C10, 05C60, 57K10, 57K12, 57M15

Keywords: slice knot, parity, concordance

\section{Introduction}

Knot concordance plays a crucial role in the low dimensional topology~\cite{Liv}.

%<Сошлись на все, что знаешь про Hauptvermutung, 4-гладкую гипотезу Пуанкаре>

The better understanding of knot sliceness and the concordance group of classical knots
led to significant results~\cite{Man, MP}.

%<Сошлись>

In the last decades, detecting new $\Z^{\infty}$ summands in the group $\Theta^3_\Z$
%<забыл, как обозначается, в общем, группа кобордизмов>
%<Jennifer Hom,Ciprian Manolescu>
were considered as significant results~\cite{DHST}.

To tackle these problems, a very elaborated techniques was developed.
Say, Heegaard-Floer Homology originally used some count of holomorphic discs and $spin^{c}$-structures
on manifolds.

The present paper deals with the sliceness problems for knots in the full torus\footnote{Note that once forever we deal only with homologically trivial knots (having winding number $0$ with respect to $S^{1}$.}
 $S^{1}\times D^{1}$.

We propose a very elementary techniques which allows one to construct a lot of sliceness obstructions
for such knots.

Our approach deals with group theoretical techniques; it is completely combinatorial, and the groups
are very easy to deal with.
%<Вот здесь мы должны разобраться сами для себя, умеем ли мы рисовать их графы Кэли>

\section{Definitions}

Let $K$ be a knot in the full torus $S^1\times \R^2$. The full torus can be considered as a thickening of the cylinder $S^1\times\R^1$. Hence, the knot $K$ is presented by its diagram $D$ in the cylinder.

Consider the group

\begin{multline}\label{eq:group_G'}
G'=\left\{a,b,b',B,B'\mid a^2=1, ab=(b')^{-1}a, aB=(B')^{-1}a,\right.\\ \left. bB^{-1}=B'(b')^{-1}, b^{-1}B=(B')^{-1}b'\right\}.
\end{multline}

There is a bijection $\phi$ on $G'$ given by the formula
\[
\phi(x_1^{\alpha_1}x_2^{\alpha_2}\cdots x_n^{\alpha_n})=x_2^{-\alpha_2}\cdots x_n^{-\alpha_n}x_1^{-\alpha_1}
\]
where $x_i=a,b,b',B$ or $B'$, and $\alpha_i=\pm1$. Let $\tilde G'=G'/\phi$.

With an oriented knot $K$ in the full torus $S^1\times\R^2$ having winding number $0$ we associate an element $f'(K)\in \tilde G'$ as follows.

We fix a reference point $P$ (not a crossing) on the Gauss diagram $G(D)$ of the knot. We enumerate the chord endpoints in the core circle as they appear according to the orientation. The endpoints are called to be {\em in odd position} if the number of endpoints containing itself from the starting point of $G(K)$ is odd. Otherwise the endpoint is called to be {\em in even position}.

We start walking from the reference point along the core circle of $G(D)$ and write the letters according to the following rule:

\begin{enumerate}
  \item We associate $a$ with an even chord end;
  \item We associate $b, b', B$ or $B'$ to chord ends in odd positions and we associate $b^{-1}, (b')^{-1}, B^{-1}$ or $(B')^{-1}$ to chord ends in even position;
  \item We choose $b, b^{-1}, B$ or $B^{-1}$ if the chord considered is linked with evenly many even chords and $b', (b')^{-1}, B', (B')^{-1}$, otherwise;
  \item We associate $b, b^{-1}, b'$ or $(b')^{-1}$ to undercrossings, and $B, B^{-1}, B', (B')^{-1}$ to overcrossings.
\end{enumerate}

The product of the letters is a word $f'(K)$ considered as an element of $\tilde G'$.

\begin{thm}\label{thm1}
The element $f'(K)$ is an invariant of oriented knots $K$ in the full torus.
\end{thm}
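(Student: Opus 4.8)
The plan is to establish two things: that $f'(K)$ does not depend on the choice of the reference point $P$ (which makes the passage to the quotient by $\phi$ legitimate), and that it is preserved by the three Reidemeister moves carried out inside the cylinder; since any two cylinder diagrams of isotopic winding-number-zero knots in the full torus are joined by a finite sequence of such moves, these two facts give the theorem. Four pieces of combinatorial data enter the word: the over/under type of a chord end; the position parity of an end (the parity of its index along the core circle counted from $P$); the parity of a chord (even or odd, by the parity of the number of chords linked with it); and, for an odd chord, its primed/unprimed flag (the parity of the number of \emph{even} chords linked with it). The over/under type and the chord parities do not refer to $P$ at all. So the whole argument is a matter of tracking how these data change under each operation and matching the resulting change of the word with the defining relations of $G'$.

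\emph{Independence of $P$.} When $P$ is pushed across one chord end, the over/under types, the chord parities and the primed flags are untouched, since none of them refers to $P$; only the position parities change, and every one of them is reversed, because the total number of chord ends is even (so the moved end, travelling from index $1$ to index $n$, also changes parity). Consequently every occurrence of $b,b',B,B'$ is inverted, the letter $a$ stays put (we identify $a$ with $a^{-1}$ using $a^2=1$), and the end just passed is carried from the front of the word to its back. This is precisely the transformation $\phi$, so $f'(K)$ is a well-defined element of $\tilde G'=G'/\phi$.

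\emph{First and second Reidemeister moves.} In the first move the new crossing is linked with no other chord, hence it is even, and it contributes the factor $aa=a^2=1$; the two new ends shift all remaining indices by an even amount and change no linking number, so nothing else is affected. In the second move the two new crossings are linked with exactly the same set of old chords (and, in one of its versions, with each other), hence they have the same chord parity and the same primed flag; they are undercrossings on one of the two strands and overcrossings on the other; and their ends are consecutive on the core circle, hence of opposite position parity. Reading them off therefore inserts, at two places of the word, factors $x\,x^{-1}$ with $x\in\{b,b',B,B'\}$ (or $aa$ when the crossings are even), each of which is trivial in $G'$; again all other data is preserved because only even numbers of ends are inserted and only even changes of linking numbers occur. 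Hence $f'$ survives the first two moves.

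\emph{Third Reidemeister move --- the main point.} The six chord ends involved lie in three consecutive pairs on the core circle, and the move swaps the two ends of each pair; thus every one of the six flips its position parity while all other ends keep theirs. One checks that the chord parity of every crossing is preserved: a direct computation on the three chords shows that, although they may exchange which of them they are mutually linked with, each one's total linking number is unchanged, so the choice between $a$ and a $b$- or $B$-letter, and the choice between under and over, are both stable. The primed flags of the three crossings, however, can flip, according to how the pairwise linkings among them change, and this is exactly the interchange $b\leftrightarrow b'$, $B\leftrightarrow B'$ recorded by the relations $ab=(b')^{-1}a$ and $aB=(B')^{-1}a$ (conjugation by $a$ being an involution on the generators). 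It then remains to run through the finitely many local configurations --- the over/under pattern on the three strands, the relative orientations, and the even/odd type of each of the three crossings --- and in each case rewrite the three altered pairs of letters and recognise the outcome as an instance of one of the four defining relations of $G'$, or of its inverse, possibly after a conjugation by $a$ and a cancellation of $a^2$. I expect this bookkeeping to be the one real obstacle: the relations of $G'$ are evidently tailored so that each such local substitution follows from them, but confirming that the position parities, the chord parities and the primed flags all come out consistently in every picture is where the labour lies.
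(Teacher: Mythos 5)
Your proposal takes essentially the same route as the paper, whose entire proof consists of the remark that one checks the Reidemeister moves and that the bijection $\phi$ appears when the reference point is changed; your analysis of the reference-point move (all position parities flip and the first letter travels to the end, which is exactly $\phi$), of the first and second moves (cancelling pairs $aa$ or $xx^{-1}$, with all other parities and flags untouched), and your framework for the third move (the six ends flip position parity, chord parities are stable, primed flags may flip, and the relations of $G'$ are precisely the local substitutions) is correct and in fact more detailed than the paper's one-line argument. The only step you defer --- the finite enumeration of third-move configurations --- is likewise left implicit in the paper, so your outline matches the intended proof.
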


The theorem is proved by standard checking Reidemeister moves. The bijection $\phi$ appears whenever we change the reference point.

It turns out that the invariant sees much more than just knot invariance:
\begin{thm}\label{thm2}
Let $K$ be a knot in the full torus. If $f'(K)=\tilde 1\in \tilde G'$ then K is not slice.
\end{thm}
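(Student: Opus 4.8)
The plan is to prove the contrapositive of Theorem~\ref{thm2}: if $K$ is slice then $f'(K)\neq\tilde 1$. First I would fix a concrete combinatorial model of sliceness for knots in the full torus. A slice knot bounds a disk in $(S^1\times\R^2)\times[0,1]$, and choosing a generic height function gives a \emph{slicing movie}: a finite sequence of diagrams in the cylinder, starting from $D$ and ending at the empty diagram, in which consecutive diagrams differ by a Reidemeister move, by a birth or death of a small trivial circle, or by a saddle (band) move. By Theorem~\ref{thm1} the Reidemeister moves leave $f'$ unchanged, and a birth or death introduces or removes no crossings, hence no letters; so the entire behaviour of $f'$ along the movie is concentrated in the saddle moves.

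Next I would compute the effect of a single saddle on the word. A saddle attaches a band between two arcs; in the Gauss diagram this merges two pieces of the core circle and reorganizes the chord endpoints together with their parities (even/odd position) and the evenness of the linking that governs the choice between $b,B$ and $b',B'$. The key calculation is to express the new reduced word in terms of the old one after one saddle, tracking how the labels $a,b,b',B,B'$ are permuted and re-primed. I expect the defining relations of $G'$ — in particular $a^2=1$ together with the four braid-like relations $ab=(b')^{-1}a$, $aB=(B')^{-1}a$, $bB^{-1}=B'(b')^{-1}$, and $b^{-1}B=(B')^{-1}b'$ — to be exactly what is needed so that a saddle multiplies the class of the word by a \emph{specific} element of $\tilde G'$, and the claim is that this element is never $\tilde 1$.

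The heart of the argument is then a nontriviality certificate in $\tilde G'$. I would construct an auxiliary homomorphism from $G'$ onto a small, explicitly understood quotient — for instance a metabelian or dihedral-type group in which $a$ has order two and conjugation by $a$ interchanges the pair $b,b'$ and the pair $B,B'$ according to the relations above — chosen so that the identification by $\phi$ descends to it (i.e.\ $\phi$-related elements have equal image). Applying this homomorphism to the word produced by a slice movie, I would show the image is never the identity: each saddle contributes a factor that survives in the quotient, and the order-two and interchange relations prevent the accumulated product from collapsing. This gives $f'(K)\neq\tilde 1$ for every slice $K$, which is precisely the contrapositive of the stated theorem.

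The main obstacle will be the invariance and well-definedness of the saddle computation. A band may be attached in many combinatorial positions and orientations, and I must verify that the induced change of the word is well defined as an element of $\tilde G'$ — independent of these choices and compatible with $\phi$ — and that it always lands in the nontrivial coset I have isolated. Producing a usable normal form on $\tilde G'$, or equivalently a quotient faithful enough to detect the saddle contribution while respecting $\phi$, is the step on which the whole proof rests; the remaining parity bookkeeping for the endpoint positions and linking parities is routine once that quotient is in hand.
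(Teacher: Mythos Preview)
Your plan is aimed at the wrong target. Read the theorem against the rest of the paper: the Corollary immediately after the general Theorem~\ref{thm:concordance invariance} says ``if $K$ is slice then $f(K)=\tilde 1$'', and the worked example concludes non-sliceness from $f'(K)\neq\tilde 1$. So the statement of Theorem~\ref{thm2} carries an obvious misprint; the intended assertion is ``if $f'(K)\neq\tilde 1$ then $K$ is not slice''. Your ``contrapositive'' --- slice $\Rightarrow f'(K)\neq\tilde 1$ --- is the negation of what is actually true, and it is already refuted by the unknot in the full torus (empty Gauss diagram, empty word, $f'=\tilde 1$, and slice).

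Accordingly, the mechanism you propose for saddles cannot work. You claim a saddle multiplies the class of the word by a specific element of $\tilde G'$ that is \emph{never} $\tilde 1$; but in fact the effect of a saddle is exactly property~(3) in the paper's axiomatics: it realises a connected sum, and the word of the merged component equals a product $g_{n-1}g_n$ of representatives of the two pieces. In a slicing movie the side components are born trivial or die trivial, so they contribute the factor $1$, not a fixed nontrivial factor. Hence the accumulated product along the movie collapses to $\tilde 1$, not away from it. The paper's actual proof packages this as follows: one checks that $f'$, extended to links, satisfies the three axioms (equivariance under reordering, triviality on trivial components, multiplicativity under connected sum); then Theorem~\ref{thm:concordance invariance} is proved by looking at the Reeb tree of the height function on the concordance annulus, showing by induction on height that every edge off the path $P$ from $K_0$ to $K_1$ carries $\tilde 1$, and then that the value is constant along $P$. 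The slice case is the specialisation $K_1=\text{unknot}$, giving $f'(K_0)=\tilde 1$. Your quotient/homomorphism step, designed to certify nontriviality of the saddle contribution, is therefore looking for something that does not exist.
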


The theorem follows from a more general statement.

Let $G$ be a group and $\Phi$ be a family of bijections  $\phi\colon G\to G$ such that $\phi(1)=1$. For example,
\[
\Phi=\{Ad_g \mid\ g\in G\}\mbox{ where } Ad_g(h)=ghg^{-1}, h\in G.
\]

Let $\tilde G=G/\Phi$ be the set of equivalence classes of elements of $G$ modulo the relations $g\sim \phi(g)$, $g\in G$, $\phi\in\Phi$. We will denote the equivalence class of an element $g\in G$ by $\tilde g$. Note that by definition $\tilde 1=\{1\}$.

Let $f$ be an invariant of oriented links with ordered components whose value on a link $L=K_1\cup\cdots\cup K_n$ is an element $f(L)=(f_i(L))_{i=1}^n\in\prod_{i=1}^n\tilde G=\tilde G^n$. Assume $f$ obeys the following conditions:

\begin{enumerate}
  \item For any permutation $\sigma\in\Sigma_n$ $\sigma\circ f(L)=f\circ\sigma(L)$. The permutation interchanges the components of a link $L$ and interchanges the components of the value $f(L)$;
  \item If the component $K_i$ is trivial then $f_i(L)=\tilde 1$;
  \item Let $L=K_1\cup\cdots\cup K_n$, $L'=K_1\cup\cdots\cup K_{n-2}\cup (K_{n-1}\# K_n)$ and $f(L)=(f_i(L))_{i=1}^{n}\in\tilde G^{n}$. Then $f(L')=(f_i(L'))_{i=1}^{n-1}\in\tilde G^{n-1}$ where $f_i(L')=f_i(L)$ for $1\le i\le n-2$ and $f(L')_{n-1}=\widetilde{g_{n-1}g_n}$ for some representatives $g_{n-1},g_n\in G$ of the classes $f_{n-1}(L)$ and $f_n(L)$ respectively.
\end{enumerate}

\begin{thm}\label{thm:concordance invariance}
The map $f$ is a concordance invariant of links.
\end{thm}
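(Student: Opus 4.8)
The plan is to show that if two links $L$ and $L'$ are concordant, then $f(L)=f(L')$, using the standard fact that concordance is generated by a sequence of elementary moves: births of trivial split unknots, deaths of trivial split unknots, and (oriented, saddle-compatible) band moves — together with ambient isotopy. More precisely, a concordance between $L$ and $L'$ in $S^3\times[0,1]$ (or here the full torus times the interval) can be decomposed, after a small perturbation to make the concordance Morse, into finitely many such elementary steps. Since $f$ is assumed to be a link invariant (so it is unchanged under isotopy), it suffices to check invariance under a single birth, a single death, and a single band move, and to see how these interact with the reindexing of components.

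First I would handle births and deaths. A birth introduces a new split component $K_{n+1}$ which is a trivial unknot, disjoint and unlinked from the rest. By condition (2), $f_{n+1}=\tilde1$, and by the multiplicativity/compatibility one expects that adding a split trivial component does not change the other $f_i$ (this should follow from condition (2) together with condition (3): a split trivial component can be absorbed into any other component by a connected sum along a trivial band, replacing $f_i$ by $\widetilde{g_i\cdot1}=\tilde g_i$). A death is the inverse move and is handled symmetrically. So the content is really in the band move.

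Next, the band move. An oriented band move on a link either merges two components into one (a fusion) or splits one component into two (a fission), depending on whether the band connects distinct components or joins a component to itself; and any band move can be realized, up to isotopy, as a connected sum $K_{n-1}\#K_n$ of two components once the band is isotoped to a standard position near those two strands (for a fission one first does an isotopy so that the component to be split looks like a connected sum). Condition (1) lets me permute components so that the two components involved are the last two, condition (3) then says $f$ transforms exactly as $(f_{n-1},f_n)\mapsto\widetilde{g_{n-1}g_n}$ — i.e. the value is determined up to the equivalence relation defining $\tilde G$. The key point is that the ambiguity in the choice of representatives $g_{n-1},g_n$ and in which "side" of the band one reads from, is precisely absorbed by passing to $\tilde G=G/\Phi$; this is where I would need the hypothesis $\phi(1)=1$ and the closure of $\Phi$ under the relevant operations, so that the class $\widetilde{g_{n-1}g_n}$ is well-defined independently of these choices.

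The main obstacle I anticipate is the bookkeeping for the band/fission move: a single band attachment, viewed abstractly, need not literally look like a connected sum, so one must argue that after ambient isotopy it does, and that condition (3) (stated for connected sum) therefore applies. One must also be careful that the concordance decomposition I invoke genuinely produces only these moves — in particular that no "illegal" saddle (one that would change orientations incompatibly) occurs, which is guaranteed because the concordance is an embedded oriented annulus, so every critical point of the height function is an orientation-compatible saddle, birth, or death. Once that reduction is in place, the proof is just a finite case check: isotopy invariance is given, births/deaths follow from (2) plus (3), and band moves follow from (1) plus (3). I would close by remarking that Theorem~\ref{thm2} is the special case $G=G'$, $\Phi=$ the single bijection $\phi$ together with conjugations (or whatever family makes $f'$ well-defined), applied to a knot $K$ that is slice — slice meaning concordant to the unknot, whence $f'(K)=f'(\text{unknot})=\tilde1$ would follow, contradicting the hypothesis $f'(K)\neq\tilde1$ — wait, this is the contrapositive: if $f'(K)=\tilde1$ we cannot conclude, so in fact Theorem~\ref{thm2} must use that a slice knot is concordant to the unknot in the solid torus sense and the unknot there has $f'=\tilde1$, so sliceness forces $f'(K)=\tilde1$, i.e. $f'(K)\neq\tilde1 \Rightarrow$ not slice; I would state it in that direction.
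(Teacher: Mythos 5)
Your reduction to elementary moves (isotopies, births, deaths, band moves) does not by itself prove the theorem, because $f$ is not preserved by an individual elementary move: births and deaths change the number of components, and condition (3) describes how $f$ \emph{transforms} under a fusion, $(f_{n-1},f_n)\mapsto\widetilde{g_{n-1}g_n}$ --- it does not say the value is unchanged. In a Morse-position concordance the intermediate links can have many components, and a saddle may fuse the strand descending from $K_0$ with a component that, at that level, is neither a trivial unknot nor known to carry the class $\tilde 1$ (it may only become trivial, or die, after further saddles higher up). So your ``finite case check'' tells you how $f$ changes at each critical level, but gives no mechanism for concluding that the composite of all these changes is the identity, i.e.\ that $f(K_0)=f(K_1)$. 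The sentence ``band moves follow from (1) plus (3)'' is exactly where the content of the theorem sits, and it is not justified; nor can one in general rearrange the concordance so that every saddle is a connected sum with a freshly born split unknot. (Your suggestion to ``absorb'' a born unknot by a trivial band also modifies the link rather than the concordance; in any case the serious issue is the saddles, not the births and deaths.)

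What is missing is a global bookkeeping over the whole cobordism, and this is precisely what the paper supplies. Since the concordance $W$ is an annulus, the Reeb graph $T$ of the height function $t$ restricted to $W$ is a trivalent tree with two distinguished vertices $v_0,v_1$ corresponding to $K_0,K_1$, joined by a unique path $P$; each edge $e$ carries a well-defined class $f(e)\in\tilde G$, the value of $f$ on the corresponding component of a regular level. Leaves of $T$ other than $v_0,v_1$ are local extrema of $t$, hence small trivial circles, so their edges carry $\tilde 1$ by condition (2); an induction on the number of edges in the subtree hanging off $P$, applying condition (3) at each trivalent vertex (in both the merge and the split case), shows $f(e)=\tilde 1$ for every edge $e\notin P$. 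A second induction along $P$ then shows that at every saddle met by $P$ the value of $f$ is multiplied by a representative of $\tilde 1$, so $f(e)=f(K_0)$ for all $e\in P$, and in particular $f(K_1)=f(K_0)$. Your outline never isolates this tree structure (or any equivalent normal-form statement guaranteeing that every saddle touching the main strand involves a component with trivial $f$-value), and without it the argument does not close; with it, your case analysis of births, deaths and saddles becomes exactly the inductive steps of the paper's proof.
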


\begin{cor}
If $K$ is a slice knot then $f(K)=\tilde 1$.
\end{cor}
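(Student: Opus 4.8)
The plan is to obtain the corollary as an essentially immediate consequence of Theorem~\ref{thm:concordance invariance}, via the classical identification of sliceness with concordance to the trivial knot. Recall why a slice knot $K$ is concordant to the unknot: given a disk $D$ properly embedded in $B^4$ with $\partial D=K\subset S^3$, choose an interior point $p\in D$ and a ball $B_0$ about $p$ small enough that $D\cap B_0$ is a flat (hence unknotted) disk; then $B^4\setminus\mathrm{int}\,B_0\cong S^3\times[0,1]$, and $D\cap(B^4\setminus\mathrm{int}\,B_0)$ is an annulus in $S^3\times[0,1]$ joining $K$ on one end to the unknot $U:=D\cap\partial B_0$ on the other, that is, a concordance from $K$ to $U$. (Conversely one caps a concordance annulus from $K$ to $U$ with a disk bounded by $U$ to recover a slice disk for $K$, so the two notions agree; only the displayed direction is used here.)

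Granting this, I would apply Theorem~\ref{thm:concordance invariance} to the concordant one-component links $K$ and $U$ to get $f(K)=f(U)$, and then evaluate the right-hand side: the sole component of $U$ is trivial, so the triviality condition on $f$ forces $f_1(U)=\tilde1$, that is $f(U)=(\tilde1)=\tilde1\in\tilde G=\tilde G^{1}$. Hence $f(K)=\tilde1$, as claimed.

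A proof that does not lean on the full strength of Theorem~\ref{thm:concordance invariance} can instead be given by working with the Morse theory of the slice disk directly. One puts $D$ in normal form with respect to the radial function on $B^4$: it may be assumed that $D$ has no local maxima and that all of its $k$ local minima occur below all of its $k-1$ saddle points. Read from just above the minima up to $K$, the level sets then form a movie $U_k\to\cdots\to K$ in which $U_k$ is the $k$-component unlink and each of the $k-1$ saddle events is a fusion of two components into one — necessarily so, since the number of components drops from $k$ to $1$ in $k-1$ single steps. Now $f(U_k)=(\tilde1,\dots,\tilde1)$ by the triviality condition; each fusion, after using the permutation-equivariance condition to make the two merging components the last two, turns two coordinates equal to $\tilde1$ into a single coordinate $\widetilde{1\cdot1}=\tilde1$ by the connected-sum condition; and the interspersed Reidemeister moves leave $f$ unchanged because $f$ is a link invariant. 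So inductively $f(K)=\tilde1$.

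The only non-formal ingredient is the normal-form lemma for slice disks (no maxima, all minima below all saddles), which is classical. I would also emphasize that the connected-sum condition lets one propagate the value of $f$ through merges but not through splittings, so it is essential to read the slice movie in the direction in which every saddle is a fusion — and the normal form is exactly what guarantees this. Along the first, shorter route this subtlety is already absorbed into Theorem~\ref{thm:concordance invariance}, so there nothing is needed beyond that theorem together with the standard fact that slice knots are concordant to the trivial knot.
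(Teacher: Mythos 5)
Your first route is exactly what the paper intends: the corollary is stated with no separate proof precisely because it is immediate from Theorem~\ref{thm:concordance invariance} together with the standard observation that a slice knot is concordant to the unknot, whose single component is trivial, so property (2) gives $f(U)=\tilde 1$ and hence $f(K)=\tilde 1$. One small adjustment to your wording: in this paper sliceness takes place in $M\times[0,1]$ (with $M$ the full torus or thickened torus), not in $B^4$, so rather than deleting a ball in $B^4$ one should, say, remove a small disk of the slice disk around a local maximum of the $[0,1]$-coordinate and extend the resulting small unknot vertically to the top level; this produces the required concordance from $K$ to an unknot inside $M\times[0,1]$, and the rest of your argument is unchanged.

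Your alternative, ``Morse-theoretic'' route, however, rests on a claim that is not classical. It is classical that the critical points of the height function on a properly embedded disk can be ordered with all minima below all saddles below all maxima; it is \emph{not} classical that the local maxima can be eliminated. A slice disk admitting a position with no local maxima is exactly a ribbon disk, so the ``normal-form lemma'' you invoke is at the level of the slice--ribbon problem, which is open. Without it, the movie read upward from the minima contains death events as well as fusions and fissions, and the three listed properties of $f$ do not directly tell you how the remaining coordinates behave when a split trivial component is capped off, so the induction as you set it up does not close. Since the first route already suffices and coincides with the paper's argument, this does not affect the validity of your solution, but the second argument should either be dropped or rewritten so that it only uses the genuinely classical ordering of critical points (at which point it essentially reproduces the Reeb-graph proof of Theorem~\ref{thm:concordance invariance} rather than bypassing it).
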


\subsection{Concordance of links in the thickened torus}

Let $L=L_1\cup L_2\cup\cdots\cup L_n$ be a link in $S^3$ such that $L_1\cup L_2$ forms the Hopf link. Then $S^3\setminus(L_1\cup L_2)$ is diffeomorphic to the thickening $M=T^2\times (0,1)$ of the torus. Hence, the link $\bar L=L_3\cup\cdots\cup L_n$ can be considered as a link in the thickened torus $M$.

\begin{thm}
Two links $L=L_1\cup L_2\cup\cdots\cup L_n$ and $L'=L'_1\cup L'_2\cup\cdots\cup L'_n$ in $S^3$ such that $L_1\cup L_2$ and $L'_1\cup L'_2$ are Hopf links, are concordant if and only if the links $\bar L$ and $\bar L'$ are concordant in $M$, i.e. there exists a smooth embedding $\bar W\colon \sqcup_{i=3}^n S^1\times [0,1]\to M\times [0,1]$ such that $\bar W\cap M\times\{0\}=\partial_0 \bar W=\bar L\times\{0\}$ and $\bar W\cap M\times\{1\}=\partial_1 \bar W=\bar L'\times\{1\}$.
\end{thm}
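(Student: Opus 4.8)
The statement is essentially the assertion that the standard diffeomorphism $S^3 \setminus (L_1 \cup L_2) \cong T^2 \times (0,1)$, valid when $L_1 \cup L_2$ is a Hopf link, is compatible with concordance. I would organize the argument into the "only if" direction and the "if" direction.

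For the "only if" direction: suppose $W \colon \sqcup_{i=1}^n S^1 \times [0,1] \to S^3 \times [0,1]$ is a concordance between $L$ and $L'$. The first two components $W_1, W_2$ give a concordance from the Hopf link $L_1 \cup L_2$ to the Hopf link $L_1' \cup L_2'$. The key point I would need is that a concordance of the Hopf link is "standard" — more precisely, that the complement $(S^3 \times [0,1]) \setminus (W_1 \cup W_2)$ is diffeomorphic to $(T^2 \times (0,1)) \times [0,1]$ rel the boundary identifications coming from the two ends. This is where the Hopf link's special status enters: the Hopf link is the unique link whose complement is $T^2 \times (0,1)$, and one needs the relative version for the cylinder. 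I would try to get this from the fact that the two Hopf sublinks of $L_1\cup L_2$ and $L'_1\cup L'_2$ cobound annuli $A_1, A_2$ in $S^3\times[0,1]$; pushing these annuli around and using that linking numbers are preserved, one identifies tubular neighborhoods and then the complements. Once the ambient cylinder complement is identified with $(T^2\times(0,1))\times[0,1]$, the restriction of $W$ to the remaining components $W_3,\dots,W_n$ becomes exactly the desired concordance $\bar W$ in $M\times[0,1]$.

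For the "if" direction: given $\bar W \colon \sqcup_{i=3}^n S^1 \times [0,1] \to M \times [0,1]$, I would re-embed $M \times [0,1] = (T^2 \times (0,1)) \times [0,1]$ back into $S^3 \times [0,1]$ as the complement of a tubular neighborhood of the trivial concordance of a fixed Hopf link $H \times [0,1]$ (where $H = L_1 \cup L_2$ standardly placed). Concatenating $\bar W$ with $H\times[0,1]$ produces a smooth embedding of $n$ annuli in $S^3\times[0,1]$ whose bottom is $H \cup \bar L$ and whose top is $H \cup \bar L'$. Since any two Hopf links in $S^3$ are isotopic, $H\cup\bar L$ is concordant (indeed isotopic in the first two components, then carry $\bar L$ along) to $L$, and similarly $H\cup\bar L'$ to $L'$; composing these concordances gives a concordance from $L$ to $L'$. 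One must check the components can be taken disjoint throughout, which follows from $\bar W$ living in $M\times[0,1]$ disjoint from the neighborhood of $H\times[0,1]$ by construction.

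The main obstacle I expect is the relative/parametrized uniqueness statement for the Hopf link complement used in the "only if" direction: showing that a concordance of the Hopf link has complement a product $(T^2\times(0,1))\times[0,1]$ in a way that matches the product structures on the two ends. Concordance of the Hopf link components need not be isotopy, so one cannot simply quote isotopy extension; instead one argues via the annuli $A_1,A_2$ bounded by corresponding components and a careful tubular neighborhood / complement analysis, possibly invoking that the relevant pieces are irreducible and that the peripheral tori carry enough information (meridians and the Seifert-framed longitudes with the right linking data) to pin down the gluing up to the diffeomorphisms that, on the torus factor, extend over $T^2\times(0,1)$. Everything else — functoriality of the complement construction, carrying $\bar W$ along, concatenating concordances — is routine.
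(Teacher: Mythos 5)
Your decomposition is the same as the paper's: for the ``if'' direction you glue the product annuli $L_1\times[0,1]\cup L_2\times[0,1]$ onto $\bar W$ (after an isotopy arranging $L_1=L'_1$, $L_2=L'_2$), and for the ``only if'' direction you remove the first two concordance annuli and identify the exterior with $M\times[0,1]$. The ``if'' direction is complete and coincides with the paper's. In the ``only if'' direction you have correctly isolated the crux --- that a concordance from the Hopf link to itself can be standardized so that its exterior is $(T^2\times(0,1))\times[0,1]$ compatibly with the product structures at the two ends --- but your sketch does not prove it, and the ingredients you list would not suffice as stated: the annuli $W_1,W_2$ may be knotted in $S^3\times[0,1]$ (concordance is not isotopy), preservation of linking numbers pins down neither the framings of tubular neighborhoods nor the diffeomorphism type of the exterior, and irreducibility/peripheral-torus arguments are $3$-dimensional tools that do not determine a $4$-manifold rel boundary. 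Even the preliminary claim that $W_i$ joins $L_i\times\{0\}$ to $L_i\times\{1\}$ rather than permuting the two Hopf components requires an argument. So there is a genuine gap at precisely this step.

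You should know, however, that the paper does not close this gap either: its proof of this direction literally begins ``Assume that $L_i\times\{0\}$ and $L_i\times\{1\}$ belong to one component of $W$'' and ``Assume also that there exists an isotopy of $S^3\times[0,1]$, fixed on the boundary, transforming these components to $L_i\times[0,1]$'', and only then deletes them. In other words, the published argument establishes the theorem only under exactly the straightening hypotheses you flagged as the main obstacle, so your proposal is at least as complete as the paper's proof and more candid about where the real $4$-dimensional content lies. To obtain the theorem as stated one would need a genuine standardization result for self-concordances of the Hopf link (or one should weaken the statement by adding the straightening hypothesis, as the paper's proof implicitly does).
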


\begin{proof}
  After some isotopy of $L'$ we can assume that $L_1=L'_1$ and $L_2=L'_2$.

  Let $\bar L$ and $\bar L'$ are concordant in $M$, i.e. exists a set of cylinders $\bar W\subset M\times[0,1] =S^3\setminus (L_1\cup L_2)\times[0,1]$. %We can apply an isotopy to the spanning cylinders $\bar W$ so that $\bar W\cap (L_1\cup L_2)\times[0,1]=\emptyset$.
  Then $W=\bar W\cup L_1\times[0,1] \cup L_2\times[0,1]$ establishes a concordance between $L$ and $L'$ in $S^3$.

 Let $L$ and $L'$ are concordant in $S^3$. Then there exists an embedding $W\colon \sqcup_{i=1}^n S^1\times [0,1]\to S^3\times [0,1]$ so that $W\cap S^3\times\{0\}=\partial_0 W=L\times\{0\}$ and $W\cap S^3\times\{1\}=\partial_1 W=L'\times\{1\}$.

 Assume that $L_i\times\{0\}$ and $L_i\times\{1\}$ belong to one component of $W$, $i=1,2$. Assume also that there exists an isotopy of $S^3\times[0,1]$ which is fixed on the boundary and transforms the components of $W$ containing $L_i\times\{0\}$, $i=1,2$, to $L_i\times[0,1]$. Denote the sufrace $W$ after the isotopy by $W'$.
 Then $\bar W=W'\setminus (L_1\cup L_2)\times[0,1]$ is a concordance of $\bar L$ and $\bar L'$ in $S^3\setminus (L_1\cup L_2)=M$.
\end{proof}

\section{Proof of Theorem~\ref{thm:concordance invariance}}

\begin{proof}
  Consider first the case of knots. Let $K_0$ and $K_1$ be two concordant knots. Then there exists an annulus $W\subset \R^3\times [0,1]$ such that $W\cap \R^3\times\{i\}=K_i$, $i=0,1$.

  Let $t$ be the coordinate on the interval $[0,1]$. We can assume that $t$ is a simple Morse function on $W$. Consider the Reeb graph $T$ of the function $t$ on $W$. That means the graph $T$ is the quotient space $W/\sim$ where $x\sim y$ iff $x,y$ belong to the same connected component of the level $t^{-1}(c)\subset W$ for some $c\in[0,1]$. The graph $T$ is a trivalent graph and its vertices correspond to components of $t^{-1}(c)$ which contains singular points of the map $t$ except the vertices $v_0$, $v_1$ which corresponds to the knots $K_0$ and $K_1$.

  The graph $T$ is a tree because $W$ is an annulus. Then there is a unique path $P$ in $T$ with ends $v_0$ and $v_1$.

  For any edge $e$ in the graph $T$ choose a point $z\in e$. Let $c=t(z)$, $L_c=t^{-1}(c)\subset W$ and $K_i$ be the component of the link $L_c$ that corresponds to $z$. Denote the element $f(L_c)_i\in\tilde G$ by $f(z)$. Since $f$ is an invariant, the value $f(z)$ is the same for all internal points $z\in e$. Hence, there is a well defined map $f\colon E(T)\to \tilde G$ where $E(T)$ is the set of edges of $T$.

  We will prove that $f(e)=\tilde 1$ for any $e\not\in P$ and $f(e)=f(K_0)$ for any $e\in P$.

  Let $e\in E(T)\setminus P$. Let $T(e)$ be the component of the graph $T\setminus e$ which does not contain the path $P$ and $v(e)$ be the end of the edge $e$ in $T_e$. The {\em height} $h(e)$ of the edge $e$ is the number of the edges in $T(e)$.

  Let us prove that $f(e)$ for any $e\in E(T)\setminus P$ by induction on the height $h(e)$.

  Let $h(e)=0$. Then $e$ is a leaf and it corresponds to a trivial component $K_i$. Then $f(e)=\tilde 1$ by the second property of $f$.

  Let $h(e)>0$. Denote the other edges incident to $v(e)$ by $e'$ and $e''$. Then $h(e')<h(e)$ and $h(e'')<h(e)$, hence, $f(e')=f(e'')=\tilde 1$.
  With respect to the function $t$ there are two cases.

  1. Let $e'$ and $e''$ merge to the edge or the edge $e$ split into $e'$ and $e''$. Then the edges $e,e',e''$ correspond to link components $K, K'$ and $K''$ such that $K=K'\# K''$. By the third property of the invariant $f$, there exist elements $g\in f(e)$, $g'\in f(e')$, $g''\in f(e'')$ such that $g=g'g''$. Since $g'=g''=1$, we have $g=1$, hence $f(e)=\tilde 1$.

  2. Let $e$ and $e'$ merge to the edge $e''$ or the edge $e''$ split into $e$ and $e'$. Then there exist $g\in f(e)$, $g'\in f(e')$, $g''\in f(e'')$ such that $g''=gg'$. Since $g'=g''=1$ by the induction, $g=1$ and $f(e)=\tilde 1$.

  Now, let us prove that $f(e)=f(K_0)$ for any $e\in P$. The path $P$ is a sequence of edges $e_0, e_1,\dots, e_n$ where $v_0$ is incident to $e_0$ and $v_1$ is incident to $e_n$. We prove that $f(e_i)=f(K_0)$ by induction on $i$. By definition, $f(e_0)=f(v_0)=f(K_0)$.

  Assume that $f(e_i)=f(K_0)$. There can be two cases. Let the edge $e_i$ split into the edge $e_{i+1}$ and some edge $e\in E(T)\setminus P$. By the third property of the invariant $f$, there exist elements $g_i\in f(e_i)$, $g_{i+1}\in f(e_{i+1})$, $g\in f(e)$ such that $g_i=g_{i+1}g$. Since $e\in E(T)\setminus P$, $f(e)=\tilde 1$ and $g=1$. Then $g_i=g_{i+1}$ and $f(e_{i+1})=f(e_i)=f(K_0)$.

   If the edge $e_i$ merge with some edge $e\in E(T)\setminus P$ to the edge $e_{i+1}$ then $g_{i+1}=g_{i}g$ for some elements $g_i\in f(e_i)$, $g_{i+1}\in f(e_{i+1})$, $g\in f(e)$. Since $e\in E(T)\setminus P$, $f(e)=\tilde 1$ and $g=1$. Then $g_i=g_{i+1}$ and $f(e_{i+1})=f(e_i)=f(K_0)$.

   Thus, we have $f(K_1)=f(v_1)=f(e_n)=f(K_0)$. That means, $f$ is a concordance invariant.

   The concordance invariance for links can be proved analogously.

\end{proof}

\begin{exa}
Consider the RGB-link from~\cite{MP} with parameters $(0,1,0,1,2,1)$, see Fig.~\ref{fig:RGB_link}.

\begin{figure}[h]
\centering
\includegraphics[width=0.5\textwidth]{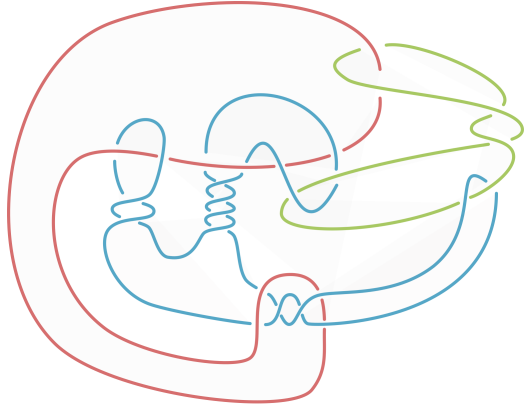}
\caption{The RGB-link}\label{fig:RGB_link}
\end{figure}

The red and the green components of the link form the Hopf link. Then the green and the blue components can be considered as a 2-component link in the thickened cylinder (see Fig.~\ref{fig:RGB_torus} left), and the blue component can be considered as a knot $K$ in the thickened torus (see Fig.~\ref{fig:RGB_torus} right).

\begin{figure}[h]
\centering
\includegraphics[width=0.3\textwidth]{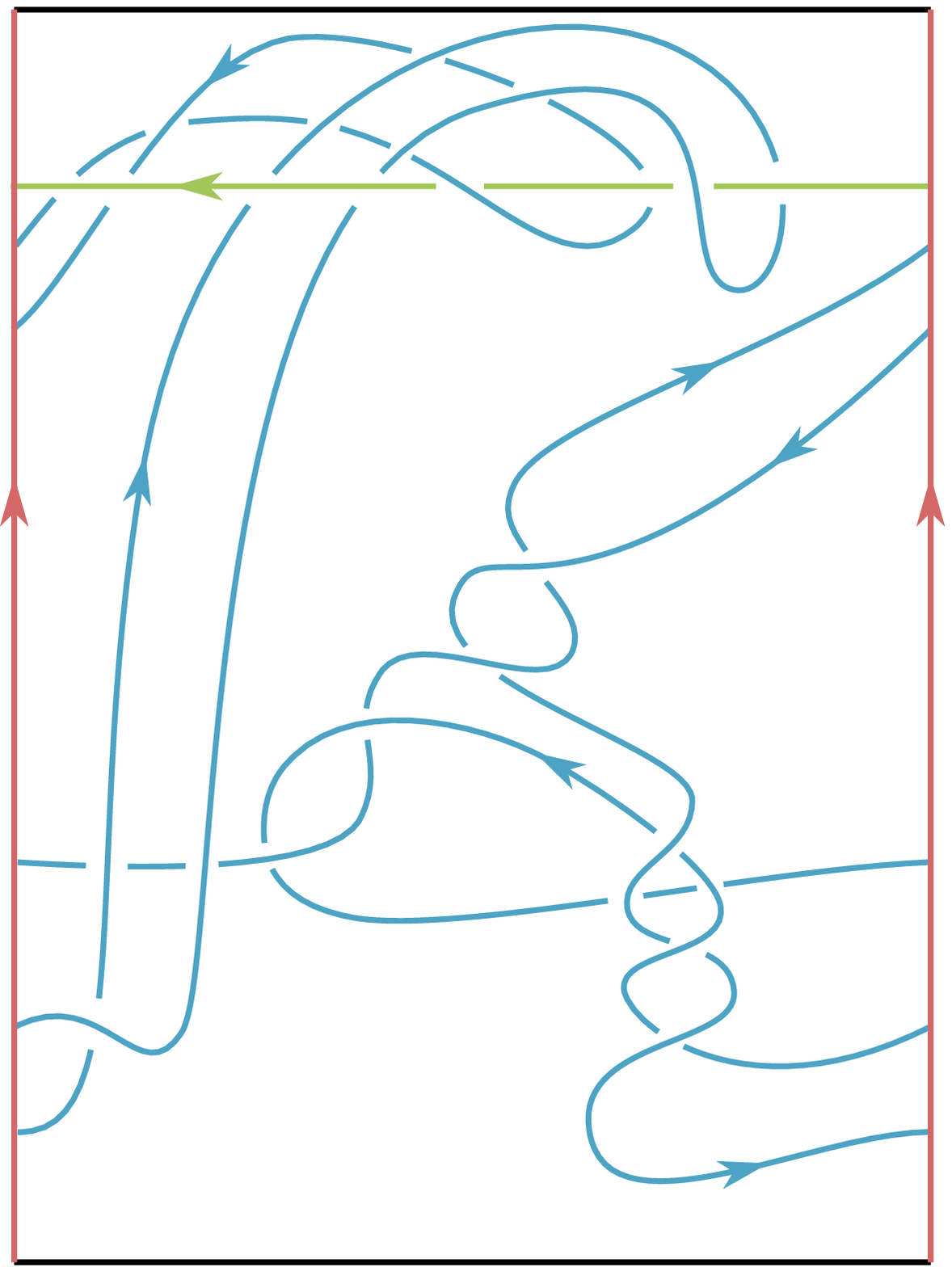}\qquad\qquad\includegraphics[width=0.3\textwidth]{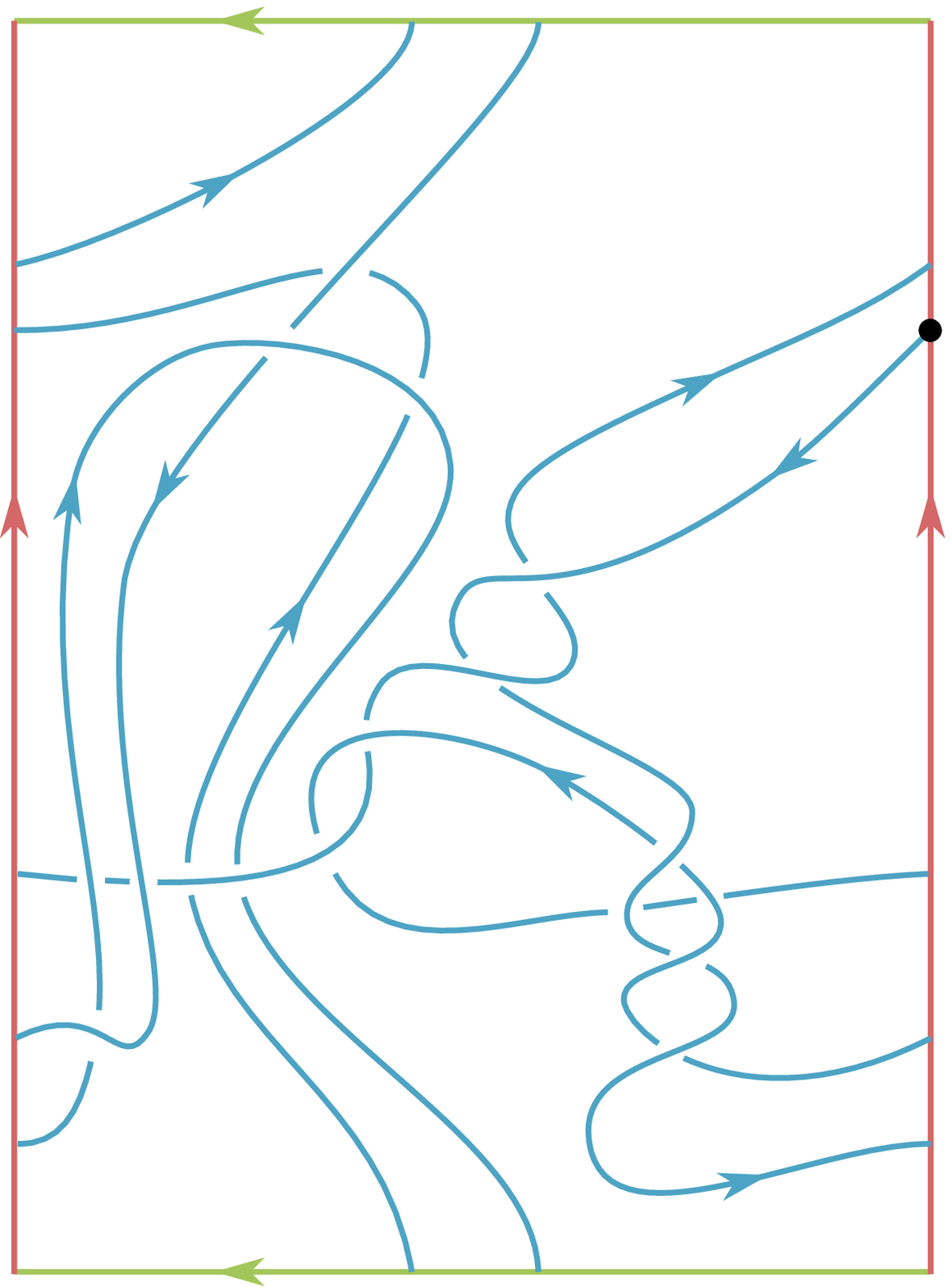}
\caption{The RGB-link defines a GB-link in the cylinder and a B-knot in the torus}\label{fig:RGB_torus}
\end{figure}

Let us calculate the invariant $f'$ of the knot $K$:

\begin{multline*}
f'(K)= bb^{-1}B'(B')^{-1}BB^{-1}BB^{-1}B'aB'B^{-1}b'abb^{-1}bab'aa(b')^{-1}\cdot\\ \cdot ab^{-1}aB^{-1}b'aaB^{-1}Bb^{-1}ab^{-1}=B'B^{-1}B'B^{-1}b'b^{-1}b'b^{-1}.
\end{multline*}

Note that the group $G'$ contains a subgroup isomorphic to $\Z\oplus\Z$ which consists of the elements $(B'B^{-1})^k(b'b^{-1})^l$, $k,l\in\Z$.
The bijection $\phi$ maps the element $(B'B^{-1})^k(b'b^{-1})^l$ to the element $(B'B^{-1})^{-k}(b'b^{-1})^{-l}$. This means that the invariant $f'(K)$ is not trivial in $\tilde G'$.

Thus, the knot $K$ in the thickened torus is not slice.
\end{exa}

\section*{Aknowlegements}

The work of V.O.Manturov was funded by the development program of the Regional Scientific and Educational Mathematical Center of the Volga Federal District, agreement N 075-02-2020.

\end{document}